\newtheorem{Theorem}{Theorem}[section]
\newtheorem{Lemma}[Theorem]{Lemma}
\newtheorem{Proposition}[Theorem]{Proposition}
\newtheorem{Example}[Theorem]{Example}
\newtheorem*{Remark}{Remark}
\newtheorem{Def}[Theorem]{Definition}
\newenvironment{Proof*}{{\it Proof.}}
\newcommand{\BB}{\mathcal{B}}
\newcommand{\DEF}[1]{\emph{#1}}
\newcommand{\supp}{{\rm supp}}
\newcommand{\diam}{{\rm diam}}
\begin{document}

\title{The metric dimension of the total graph of a semiring}
\author{David Dol\v zan}
\date{\today}

\address{D.~Dol\v zan:~Department of Mathematics, Faculty of Mathematics
and Physics, University of Ljubljana, Jadranska 19, SI-1000 Ljubljana, Slovenia, and Institute of Mathematics, Physics and Mechanics, Jadranska 19, SI-1000 Ljubljana, Slovenia; e-mail: 
david.dolzan@fmf.uni-lj.si}

 \subjclass[2010]{16Y60, 05C25}
 \keywords{semiring, zero-divisor, graph, metric dimension}
  \thanks{The author acknowledges the financial support from the Slovenian Research Agency  (research core funding No. P1-0222)}

\bigskip

\begin{abstract} 
We calculate the metric dimension of the total graph of a direct product of finite commutative antinegative semirings with their sets of zero-divisors closed under addition.
\end{abstract}

\maketitle 

\section{Introduction}

\bigskip

Of the many ways one can prescribe a graph to an algebraic structure, one of the most important ones is the total graph. The total graph of a ring or a semiring $S$ is a graph with all elements of $S$ as vertices, and for distinct $x, y \in S$, the vertices $x$ and $y$ are adjacent if and only if $x + y$ is a zero-divisor in $S$. Total graph has been firstly studied in \cite{anderson} for commutative rings and in \cite{oblak} for non-commutative rings. Later, it has been studied in \cite{oblak1, atani} in the setting of commutative semirings and in \cite{john} in the case of matrices over the Boolean semiring.

In a simple undirected graph, there exist a couple of natural ways to distinguish the vertices from one another - such ideas appear already in Sumner \cite{sumner} and Entringer and Gassman \cite{entringer}. Among them, one of the most important ones is the metric dimension of a graph. Let us explain the definition.
Choose an ordered subset $W=\{ w_1,w_2,\ldots,w_k\}$ of the vertex set of graph $G$. Then, for a vertex $v$ in $G$, the vector $(d(v,w_1),d(v,w_2),\ldots,d(v,w_k))$ is called the \DEF{representation} of $v$ with respect to $W$. A set $W$ is called a \DEF{resolving set} for $G$ if distinct vertices of $G$ have distinct representations with respect to $W$. A resolving set of minimal cardinality for $G$ is called a \DEF{basis} of $G$ and the cardinality of the basis is defined as the \DEF{metric dimension} of $G$, and denoted by $\dim(G)$. 

The metric dimension was studied by Slater \cite{slater}, while trying to determine the location of an intruder in a network. The notion was then also studied by Harary and Melter \cite{harary}. The metric dimension has later been used in many applications, for example in pharmaceutical chemistry \cite{cameron}, robot navigation in space \cite{khuller}, combinatorial optimization \cite{sebo} and sonar and coast guard long range navigation \cite{slater, slater2002}.


In view of studying the metric dimensions of graphs corresponding to different algebraic structures, the metric dimension of a zero-divisor graph of a commutative ring was studied in \cite{ou, pirzada1, pirzada, raja, redmond21}, a zero-divisor graph of a matrix semiring in \cite{dolzan0}, a total graph of a finite commutative ring in \cite{dolzan1}, an annihilating-ideal graph of a finite ring in \cite{dolzan21}, and a commuting graph of a dihedral group in \cite{ali}. 

In this paper, we study the metric dimension of the total graph of a direct product of finite commutative antinegative semirings. A \emph{semiring} is a set $S$ equipped with binary operations $+$ and $\cdot$ such that $(S,+)$ is a commutative monoid with identity element 0 and $(S,\cdot)$ is a monoid with identity element 1, while both operations in $S$ are connected by distributivity. We also assume that $0$ annihilates $S$. Semiring $S$ is called \emph{commutative} if we have $ab=ba$ for all $a,b \in S$ and \emph{antinegative} or \emph{zero-sum-free} (or simply, an \emph{antiring}), if $a+b=0$ for $a, b \in S$ implies $a=b=0$. Commutative semirings often arise naturally, since the set of nonnegative integers (or reals) with the usual operations of addition and multiplication forms a commutative semiring. But there are many more examples, as distributive lattices, tropical semirings, dioids, fuzzy algebras, inclines and bottleneck algebras are all different examples of commutative semirings. The simplest semiring is the Boolean semiring $\BB=\{0,1\}$, where $1+1=1$. The theory of semirings also provides many applications, for example in optimization theory, automatic control, models of discrete event networks and graph theory (\cite{baccelli, cuninghame, li, zhao}). 

This paper is organized as follows. In the next section, we deal with the preliminary concepts and lemmas that come useful later, while we also make an argument as to why it makes more sense to study the total graph of the semiring, where the vertex set is limited to the set of all zero-divisors. In the main section, we determine the metric dimension of a direct product of antinegative finite commutative semirings, with their set of zero-divisors closed under addition. Such semirings for example contain the Boolean semiring, all finite distributive lattices with zero and exactly one atom, all finite semifields, all polynomial semirings over finite antirings modulo a monome, etc.
It turns out that the metric dimension behaves somewhat differently in different settings, with the exceptions consting of semifields and semirings such that all their elements except the identity are zero divisors. The main results of this paper are Proposition \ref{semimetricbool}, Theorem \ref{semimetricu} and Theorem \ref{haupt}, which together cover all possible cases.
It is perhaps interesting to compare these results with \cite[Theorem 3.5 and Theorem 3.7]{dolzan1}, where even for the direct product of two finite fields, the metric dimension of its total graph is not completely determined.

\bigskip


\bigskip
\section{Preliminaries}
\bigskip

Throughout this paper, we shall suppose that $S$ is a semiring, and that $Z(S)$ denotes the set of all zero-divisors in $S$, $Z(S)=\{x \in S;$ there exists $0 \neq y \in S \text { such that } xy=0 \text { or } yx=0 \}$. 
We denote by $\Gamma(S)$ the \emph{total graph} of $S$. 
The vertex set $V(\Gamma(S))$ of $\Gamma(S)$ is the set of all elements in $S$ and 
an unordered pair of vertices $x,y \in V(\Gamma(S))$, $x \neq y$, is an edge 
$x-y$ in $\Gamma(S)$ if $x+y \in Z(S)$.
The sequence of edges $x_0 - x_1$, $ x_1 - x_2$, ..., $x_{k-1} - x_{k}$ in $\Gamma$ is called \emph{a path of length $k$}. The \DEF{distance} between vertices $x$ and $y$ is the length of the shortest
path between them, denoted by $d(x,y)$, while the \DEF{diameter} $\diam(\Gamma)$ of the graph $\Gamma$ is the longest distance between any two vertices of the graph.

We shall need the following definition.

\begin{Def}
Let $v$ be a vertex of a graph $G$. Then the \emph{open neighbourhood} of $v$ is $N(v)=\{u \in V(G); \text{ there exists an edge } uv \text { in G}\}$ and the \emph{closed neighbourhood} of $v$ us $N[v]=N(v) \cup \{v\}$.  
Two distinct vertices $u$ and $v$ of $G$ are \emph{twins} if $N(u)=N(v)$ or $N[u]=N[v]$.
\end{Def}

Twins turn out to be a very important notion when studying the metric dimension of a graph due to the following lemma. 

\begin{Lemma}[\cite{hernando}]\label{twins}
Suppose $u$ and $v$ are twins in a connected graph $G$ and the set $W$ is a resolving set for $G$. Then $u$ or $v$ is in $W$.
\end{Lemma}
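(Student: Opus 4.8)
The plan is to show that twins are metrically indistinguishable by every \emph{other} vertex, and then to invoke the defining property of a resolving set. Concretely, I would first establish the auxiliary claim that for every vertex $w \in V(G)$ with $w \neq u$ and $w \neq v$, one has $d(u,w) = d(v,w)$. Granting this, suppose for contradiction that neither $u$ nor $v$ lies in $W$. Then every $w \in W$ satisfies $w \neq u$ and $w \neq v$, so by the claim the coordinates $d(u,w)$ and $d(v,w)$ agree for all $w \in W$; hence $u$ and $v$ have identical representations with respect to $W$. This contradicts the assumption that $W$ resolves $G$, so at least one of $u,v$ must belong to $W$.

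The heart of the argument is therefore the distance equality, which I would prove by a path-surgery argument, treating the two definitions of twins uniformly. Fix $w \neq u,v$ and take a shortest path $v = x_0, x_1, \ldots, x_k = w$, so that $d(v,w) = k$ and $x_1 \in N(v)$. In the open-twin case $N(u) = N(v)$, the hypothesis gives $x_1 \in N(u)$; moreover $u \notin N(u) = N(v)$ forces $x_1 \neq u$, so $u, x_1, \ldots, x_k$ is a walk of length $k$ from $u$ to $w$ and $d(u,w) \le k$. In the closed-twin case $N[u] = N[v]$, we have $x_1 \in N(v) \subseteq N[u]$, so either $x_1 \in N(u)$, giving the same walk of length $k$, or $x_1 = u$, in which case $u, x_2, \ldots, x_k$ is a walk of length $k-1$; either way $d(u,w) \le k$. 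Thus $d(u,w) \le d(v,w)$, and since the twin relation is symmetric in $u$ and $v$, the reverse inequality follows identically, yielding equality.

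I expect the only delicate point to be the bookkeeping in the closed-twin case, where $u$ and $v$ are adjacent and a shortest $v$--$w$ path may pass through $u$ at its first step; one must verify that this still produces a walk from $u$ to $w$ of length at most $k$ rather than silently breaking the symmetry between the two twins. Because the distance is computed as a minimum over walks, the walk-versus-path distinction is harmless, and the bound $d(u,w)\le d(v,w)$ survives even when the walk has length $k-1$. Beyond the definitions of the open and closed neighbourhoods and of a resolving set, no further graph-theoretic input is required, so once the distance equality is in hand the contradiction is immediate.
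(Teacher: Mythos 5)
Your proof is correct and complete. Note, however, that the paper does not prove this lemma at all: it is quoted directly from the cited reference (Hernando et al.), so there is no in-paper argument to compare against. Your argument is the standard one for this fact, and the two delicate points are handled properly: in the open-twin case $u\notin N(u)=N(v)$ guarantees the shortest $v$--$w$ path does not start at $u$, and in the closed-twin case the possibility $x_1=u$ still yields a $u$--$w$ walk of length $k-1\le k$, so $d(u,w)\le d(v,w)$ in all cases and equality follows by symmetry. The concluding contradiction with the definition of a resolving set is immediate.
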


The next lemma shows that in order to study the metric dimension, it makes sense to limit ourselves to the subgraph with the vertex set equal to the set of all zero-divisors.

\begin{Lemma}
\label{semidisco}
Let $S$ be an antinegative semiring. Then $\Gamma(S)$ is a disconnected graph (where the set of all non zero-divisors are isolated vertices). The subgraph $\Gamma_1(S)$ with the vertex set equal to $Z(S)$ is connected with $\diam(\Gamma_1(S)) \leq 2$.
\end{Lemma}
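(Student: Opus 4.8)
The plan is to isolate one structural fact about antinegative semirings and then read off both assertions from it. The fact I would establish first is: \emph{for any $x,y \in S$, if $x+y \in Z(S)$ then both $x \in Z(S)$ and $y \in Z(S)$.} To see this, suppose $x+y \in Z(S)$, so there is some $z \neq 0$ with $(x+y)z = 0$ (the case $z(x+y)=0$ being symmetric). Distributivity gives $xz + yz = 0$, and antinegativity forces $xz = 0$ and $yz = 0$ separately. Since $z \neq 0$, this already exhibits $x$ (and likewise $y$) as a zero-divisor. The only non-routine ingredient is recognizing that antinegativity upgrades the single relation $xz+yz=0$ into the two relations $xz=0$ and $yz=0$; the rest is just the definition of an edge.

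From this, the first assertion is immediate by contraposition. If $x \notin Z(S)$, then $x+y \notin Z(S)$ for every $y$, so $x$ is adjacent to nothing; hence every non-zero-divisor is an isolated vertex and every edge of $\Gamma(S)$ lies inside $Z(S)$. Since $1$ is never a zero-divisor (from $1\cdot y = y$, the equation $1\cdot y = 0$ gives $y=0$), there is always at least one isolated vertex, so $\Gamma(S)$ is disconnected as claimed, with the non-zero-divisors exactly the isolated vertices.

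For the second assertion I would use $0$ as a universal vertex of $\Gamma_1(S)$. Note first that $0 \in Z(S)$, since $0\cdot y = 0$ for any $y \neq 0$. Then for every nonzero $x \in Z(S)$ we have $x + 0 = x \in Z(S)$, so $x$ is adjacent to $0$. Thus every vertex of $\Gamma_1(S)$ lies within distance $1$ of $0$, and any two distinct vertices $x,y$ are joined by the path $x-0-y$, giving $d(x,y) \leq 2$. Hence $\Gamma_1(S)$ is connected with $\diam(\Gamma_1(S)) \leq 2$. I expect no genuine obstacle in this lemma; the one point requiring a little care is the application of antinegativity in the first step, and the only degenerate case is $Z(S)=\{0\}$, where $\Gamma_1(S)$ is a single vertex and the diameter bound holds trivially.
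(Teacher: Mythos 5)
Your proposal is correct and follows essentially the same route as the paper: the antinegativity argument showing that $(x+y)z=0$ forces $xz=0$ (so non-zero-divisors are isolated), followed by using $0$ as a universal vertex of $\Gamma_1(S)$ to get paths $x-0-y$ of length at most $2$. The only difference is that you spell out a few details the paper leaves implicit (that $1$ is always a non-zero-divisor, and the degenerate case $Z(S)=\{0\}$), which is fine.
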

\begin{proof}
Choose and $s \in S$ such that $s \notin Z(S)$ and suppose that $s - w$ is an edge in $\Gamma(S)$ for some $w \in S$. This implies that $s+w \in Z(S)$, so there exists $z \neq 0$ such that $(s+w)z=0$. Since $S$ is antinegative, this yields $sz=0$, a contradiction. 
On the other hand, if $s \in Z(S)$, then $0 - s$ is an edge in $\Gamma_1(S)$. 
\end{proof}

Therefore, we shall limit ourselves to studying the subgraph $\Gamma_1(S)$ from here onwards. 
We shall now investigate twins in the graph $\Gamma_1(S)$. 

\begin{Lemma}
\label{whentwins}
Let $n \geq 1$ and $S=S_1 \times S_2 \times \ldots \times S_n$, where $S_i$ is a finite antinegative commutative semiring with $Z(S_i)$ closed under addition for every $i=1,2,\ldots,n$. Choose $a=(a_1,a_2,\ldots,a_n), b=(b_1,b_2,\ldots,b_n) \in Z(S)$. Then $a$ and $b$ are twins in $\Gamma_1(S)$ if and only if either $a_i,b_i \in Z(S_i)$ or $a_i,b_i \notin Z(S_i)$ for every $i=1,2,\ldots, n$.
\end{Lemma}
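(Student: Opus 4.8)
The plan is to characterize twin vertices by analyzing what the condition $N(a) = N(b)$ or $N[a] = N[b]$ means in terms of the coordinatewise structure of $S$. First I would unpack the adjacency relation in $\Gamma_1(S)$: two distinct zero-divisors $x = (x_1,\ldots,x_n)$ and $y = (y_1,\ldots,y_n)$ are adjacent if and only if $x+y \in Z(S)$, and by the standard fact that $(s_1,\ldots,s_n)$ is a zero-divisor in a direct product exactly when some coordinate is a zero-divisor (or zero) in the corresponding factor, I would record that $x+y \in Z(S)$ iff $x_i + y_i \in Z(S_i)$ for some index $i$. The crucial simplification comes from antinegativity together with $Z(S_i)$ being closed under addition: I expect that within each factor, the zero-divisors of $S_i$ behave like a single ``absorbing'' class, so that $x_i + y_i \in Z(S_i)$ is controlled by whether $x_i, y_i$ lie in $Z(S_i)$ or in $S_i \setminus Z(S_i)$.

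The key lemma I would isolate is a coordinatewise criterion: for $u_i, v_i \in S_i$, when is $u_i + v_i \in Z(S_i)$? Using that $S_i$ is antinegative and $Z(S_i)$ is closed under addition, I anticipate showing that if both $u_i$ and $v_i$ are zero-divisors then their sum is a zero-divisor (immediate from closure), while if at least one is a non-zero-divisor the sum's membership in $Z(S_i)$ is governed by the non-zero-divisor summand. The point of antinegativity (as already used in Lemma \ref{semidisco}) is that adding a non-zero-divisor cannot produce a zero-divisor when the other summand is suitably controlled; more precisely, I would prove that a non-zero-divisor plus any element stays a non-zero-divisor whenever the partner is $0$, and more generally extract exactly the dichotomy needed so that membership of $u_i + v_i$ in $Z(S_i)$ depends only on the pair of ``types'' (zero-divisor vs.\ non-zero-divisor) of $u_i$ and $v_i$, not on their specific values. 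This is the step I expect to be the main obstacle, since it is where the hypotheses on $S_i$ must be leveraged carefully.

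With that coordinatewise criterion in hand, I would define for each element $x \in Z(S)$ its \emph{type vector} $\tau(x) \in \{0,1\}^n$, where the $i$-th entry records whether $x_i \in Z(S_i)$ or $x_i \notin Z(S_i)$. The preceding step shows that adjacency of $x$ and $y$ depends only on $\tau(x)$ and $\tau(y)$: specifically $x+y \in Z(S)$ iff there is a coordinate where the combined types force $x_i + y_i \in Z(S_i)$. Consequently, if $\tau(a) = \tau(b)$, then $a$ and $b$ have identical adjacency behaviour toward every other vertex, which forces $N(a) = N(b)$ or $N[a] = N[b]$, making them twins. This establishes the ``if'' direction of the stated equivalence (noting the hypothesis is precisely that $a_i, b_i$ agree in type for every $i$, i.e.\ $\tau(a) = \tau(b)$).

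For the converse, I would argue the contrapositive: suppose $\tau(a) \neq \tau(b)$, so there is a coordinate $j$ where, say, $a_j \in Z(S_j)$ but $b_j \notin Z(S_j)$. I would then construct an explicit vertex $w \in Z(S)$ that is adjacent to exactly one of $a, b$, witnessing $N(a) \neq N(b)$ and (after handling the $w \in \{a,b\}$ bookkeeping) $N[a] \neq N[b]$, so $a$ and $b$ are not twins. A natural candidate for $w$ is an element supported to interact with coordinate $j$ — for instance choosing $w_j$ so that $a_j + w_j \in Z(S_j)$ while $b_j + w_j \notin Z(S_j)$ (or vice versa), and filling the remaining coordinates with non-zero-divisors so that no other coordinate can create an edge. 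Verifying that such a $w$ lies in $Z(S)$ and genuinely separates $a$ from $b$ in the adjacency relation is the technical heart of this direction; I would use the coordinatewise criterion from the second paragraph to confirm the adjacencies, and the existence of both zero-divisors and non-zero-divisors in each $S_i$ (note $S_i$ is nontrivial since $0 \in Z(S_i)$ and $1 \notin Z(S_i)$) to guarantee the needed elements exist.
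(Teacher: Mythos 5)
Your proposal is correct and follows essentially the same route as the paper: both reduce adjacency in $\Gamma_1(S)$ to the coordinatewise criterion that $u_i+v_i \in Z(S_i)$ if and only if both $u_i,v_i \in Z(S_i)$ (antinegativity, via $(u_i+v_i)x=u_ix+v_ix=0 \Rightarrow u_ix=v_ix=0$, gives one direction and closure of $Z(S_i)$ under addition the other), so that neighbourhoods depend only on the type vector. Your explicit separating witness for the converse, with $w_j=0$ and $1$ elsewhere, is in fact slightly more careful than the paper, which leaves that direction implicit.
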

\begin{proof}
Choose any $1 \leq i \leq n$.
Notice that the fact that $S_i$ is antinegative implies that $a_i+b_i \in Z(S_i)$ yields $a_i, b_i \in Z(S_i)$: if there exists $x \in S_i$ such that $0=(a_i+b_i)x=a_ix+b_ix$, we have $a_ix=b_ix=0$.
Since $Z(S_i)$ closed under addition for every $i=1,2,\ldots,n$, we also have that $a_i, b_i \in Z(S_i)$ implies $a_i+b_i \in Z(S_i)$.
So, there is an edge between $a=(a_1,a_2,\ldots,a_n)$ and $c=(c_1,c_2,\ldots,c_n)$ in $\Gamma_1(S)$ if and only if there exists $1 \leq i \leq n$ such that $a_i$ and $c_i$ are both in $Z(S_i)$. Thus, the neighbourhood of any vertex is completely determined by which of its components belong to the set of zero-divisors. 
Therefore, two elements $a=(a_1,a_2,\ldots,a_n)$ and $b=(b_1,b_2,\ldots,b_n)$ in $S$ are twins if and only if either $a_i,b_i \in Z(S_i)$ or $a_i,b_i \notin Z(S_i)$ for every $i=1,2,\ldots, n$.
\end{proof}

\bigskip

\section{Metric dimension}

\bigskip

In this section, we examine the metric dimension of a total graph of a direct product of antinegative commutative semirings. 
Firstly, let us establish the lower bound for the metric dimension.

\begin{Lemma}
 \label{lowerb}
Let $n \geq 1$ and $S=S_1 \times S_2 \times \ldots \times S_n$, where $S_i$ is a finite antinegative commutative semiring with $Z(S_i)$ closed under addition for every $i=1,2,\ldots,n$. Then
$\dim(\Gamma_1(S)) \geq |Z(S)|-2^n+1$.
\end{Lemma}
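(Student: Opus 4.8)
The plan is to use the twin structure established in Lemma \ref{whentwins} together with the obstruction from Lemma \ref{twins}. The key observation is that the ``twin'' relation on $Z(S)$ is an equivalence relation: two vertices $a,b\in Z(S)$ are twins precisely when, coordinatewise, each $a_i$ and $b_i$ lie on the same side of the partition $S_i = Z(S_i) \sqcup (S_i \setminus Z(S_i))$. I would first verify that this is genuinely an equivalence relation on the vertex set $Z(S)$ of $\Gamma_1(S)$ (reflexivity, symmetry, and transitivity all follow immediately from the coordinatewise characterization), so that $Z(S)$ partitions into twin-equivalence classes.

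Next I would count these classes. To each vertex $a=(a_1,\ldots,a_n)\in S$ associate the ``pattern'' $\sigma(a)\in\{0,1\}^n$ whose $i$-th entry records whether $a_i\in Z(S_i)$ or $a_i\notin Z(S_i)$. By Lemma \ref{whentwins}, two elements of $Z(S)$ are twins if and only if they have the same pattern, so the twin-equivalence classes are exactly the nonempty fibers of $\sigma$ restricted to $Z(S)$. The number of distinct patterns is at most $2^n$. The only subtlety is the following: I must check that every pattern $\sigma\in\{0,1\}^n$ is actually realized by some element of $Z(S)$ (and not just of $S$), so that there really are $2^n$ classes rather than fewer. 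This holds because each $S_i$ contains $0\in Z(S_i)$ and $1\notin Z(S_i)$, so every pattern is attainable; and any element whose pattern is not the all-$1$'s string has at least one zero-divisor coordinate, hence lies in $Z(S)$. Thus $Z(S)$ splits into exactly $2^n$ twin-classes.

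Now I would invoke Lemma \ref{twins}: within each twin-class of size $m$, any resolving set $W$ must contain all but at most one of the vertices, since any two vertices in the class are twins and Lemma \ref{twins} forces $W$ to contain at least one of each such pair. Summing over the $2^n$ classes, a resolving set must omit at most one vertex per class, hence $|W| \geq |Z(S)| - 2^n$. To sharpen this to $|Z(S)| - 2^n + 1$, I would observe that the single class corresponding to the all-zero-divisors pattern (every coordinate in $Z(S_i)$) behaves no differently; the missing $+1$ comes from the fact that one cannot simultaneously omit a vertex from \emph{every} class if $\Gamma_1(S)$ is to be resolved. More precisely, I would argue that if $W$ omitted one vertex from each of the $2^n$ classes, those $2^n$ omitted vertices would themselves need to be distinguished, and one checks that at least one additional vertex must be included, giving the stated bound.

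The main obstacle I anticipate is exactly this last refinement, the passage from the clean bound $|Z(S)|-2^n$ to $|Z(S)|-2^n+1$. The coarse bound follows mechanically from Lemma \ref{twins} applied class by class, but the extra $+1$ requires a global argument about the omitted vertices rather than a local one inside a single twin-class. I would handle it by supposing, for contradiction, that some resolving set $W$ has $|W| = |Z(S)| - 2^n$, forcing exactly one omitted vertex per class, and then locating two omitted vertices whose distance vectors to $W$ coincide. The cleanest route is likely to compare the omitted representative of the all-zero-divisors class against a suitably chosen omitted representative of an adjacent pattern, using the $\diam(\Gamma_1(S)) \le 2$ bound from Lemma \ref{semidisco} to pin down all distances to $0$, $1$, or $2$ and thereby force a collision.
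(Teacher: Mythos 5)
Your overall strategy (partition $Z(S)$ into twin-classes via Lemma \ref{whentwins}, then apply Lemma \ref{twins} class by class) is exactly the paper's, but you miscount the classes, and this error propagates into a final step that does not work. By your own coordinatewise characterization, the twin-class of a vertex is determined by the set $X\subseteq\{1,\ldots,n\}$ of coordinates lying in $Z(S_i)$, and a tuple belongs to $Z(S)$ if and only if $X\neq\emptyset$. So the all-non-zero-divisor pattern is \emph{not} realized by any vertex of $\Gamma_1(S)$ -- you even note this yourself when you observe that only elements with at least one zero-divisor coordinate lie in $Z(S)$ -- yet you then conclude there are ``exactly $2^n$'' classes. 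There are at most $2^n-1$. With the correct count, the class-by-class argument immediately gives $|W|\geq |Z(S)|-(2^n-1)=|Z(S)|-2^n+1$, and the proof is finished; this is precisely the paper's argument (with the classes written as $M_X$ for $\emptyset\neq X\subseteq\{1,\ldots,n\}$).

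The ``refinement'' you propose to recover the missing $+1$ is not only unnecessary but would fail. You claim that a resolving set cannot omit one vertex from every twin-class, and plan to derive a contradiction by forcing two omitted vertices to share a representation. But Theorem \ref{semimetricu} exhibits semirings for which the bound $|Z(S)|-2^n+1$ is attained by a resolving set $W=\bigcup_X M'_X$ that omits exactly one vertex from each of the $2^n-1$ classes; omitted vertices from distinct classes are not twins and, under the hypotheses there, are genuinely distinguished. So no global argument of the kind you sketch can be valid in this generality. The fix is simply the corrected count of classes.
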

\begin{proof}
Suppose that $W$ is a resolving set for $\Gamma_1(S)$ and choose a subset $\emptyset \neq X \subseteq \{1,2,\ldots,n\}$. Define $M_X=M_1 \times M_2 \times \ldots \times M_n \subset Z(S)$, where $M_i=Z(S_i)$ if $i \in X$ and $M_i=S_i \setminus Z(S_i)$ if $i \notin X$. By Lemma \ref{whentwins}, all elements of $M_x$ are twins, so by Lemma \ref{twins}, $W$ contains all but perhaps one element of the set $M_X$. 
Observe that $a=(a_1,a_2,\ldots,a_n) \in S$ is a vertex in $\Gamma_1(S)$ if and only if there exists $i \in \{1,2,\ldots,n\}$ such that $a_i \in Z(S_i)$. Therefore, every vertex in $\Gamma_1(S)$ lies in exactly one of the sets $M_X$ for a chosen set $X \neq \emptyset$. So, $W$ contains all elements of $Z(S)$ apart from perhaps as many as there are nontrivial subsets of the set $\{1,2,\ldots,n\}$. Since there are exactly $2^n-1$ such nontrivial subsets, we have
$|W| \geq |Z(S)|-2^n+1$. Because this is true for any resolving set $W$, 
we have $\dim(\Gamma_1(S)) \geq |Z(S)|-2^n+1$ as stated.
\end{proof}

Let us examine the interesting special case of the metric dimension of the total graph of the direct product of a number of Boolean semirings.

\begin{Remark}
If $S$ is an antinegative finite commutative semiring with $|S| \geq 3$ and $|Z(S)| = 1$, then $\Gamma_1(S)$ is a trivial graph (since it has only one vertex) - obviously, $\dim(\Gamma_1(S))=0$ in this case. We shall therefore from here onwards always limit ourselves to the case where $\Gamma_1(S)$ is non-trivial.
\end{Remark}

\begin{Proposition}
\label{semimetricbool}
Let $n \geq 2$ and $S=S_1 \times S_2 \times \ldots \times S_n$, where $S_i=\BB$ for every $i=1,2,\ldots,n$. Then
$\dim(\Gamma_1(S))=n$ for $n \neq 2$ and $\dim(\Gamma_1(S))=n-1$ for $n = 2$.
\end{Proposition}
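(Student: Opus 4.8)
The plan is to translate $\Gamma_1(S)$ into a purely combinatorial intersection graph and then settle the question by a counting argument. Since $Z(\BB)=\{0\}$, a coordinate $a_i$ lies in $Z(S_i)$ precisely when $a_i=0$; so I identify each vertex $a=(a_1,\ldots,a_n)\in Z(S)$ with its \emph{zero set} $A=\{i : a_i=0\}$, a nonempty subset of $\{1,2,\ldots,n\}$ (nonempty exactly because $a\in Z(S)$), and this identification is a bijection onto the nonempty subsets, so $|Z(S)|=2^n-1$. Reading off the adjacency rule from the proof of Lemma~\ref{whentwins}, two vertices $A$ and $B$ are adjacent if and only if $A\cap B\neq\emptyset$. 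Combined with $\diam(\Gamma_1(S))\le 2$ from Lemma~\ref{semidisco}, this yields the distance function $d(A,B)=0$ if $A=B$, $d(A,B)=1$ if $A\neq B$ and $A\cap B\neq\emptyset$, and $d(A,B)=2$ if $A\cap B=\emptyset$.

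Next I would record the mechanism governing resolving sets. Let $W=\{W_1,\ldots,W_k\}\subseteq Z(S)$. For a vertex $A\notin W$ every coordinate $d(A,W_j)$ lies in $\{1,2\}$, and by the distance formula it equals $1$ exactly when $A\cap W_j\neq\emptyset$; hence the representation of $A$ is determined by, and determines, its intersection pattern $P(A)=\{j : A\cap W_j\neq\emptyset\}\subseteq\{1,\ldots,k\}$. Since every member $W_j$ is automatically distinguished from all other vertices by the coordinate in which its own distance is $0$, the set $W$ is resolving if and only if $A\mapsto P(A)$ is injective on $Z(S)\setminus W$. For the upper bound I take $W$ to be the $n$ singletons $\{1\},\{2\},\ldots,\{n\}$; then $P(A)=\{j : j\in A\}=A$, so the pattern recovers $A$ and $W$ is resolving, giving $\dim(\Gamma_1(S))\le n$.

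For the lower bound when $n\ge 3$ I would argue by counting. If $|W|=k$, the representations of the $2^n-1-k$ vertices outside $W$ all lie in $\{1,2\}^k$, a set of size $2^k$, so a resolving set must satisfy $2^k\ge 2^n-1-k$. The function $k\mapsto 2^k+k$ is increasing, and at $k=n-1$ its value $2^{n-1}+n-1$ is strictly smaller than $2^n-1$ precisely when $n<2^{n-1}$, which holds for all $n\ge 3$; hence no $W$ with $|W|\le n-1$ can resolve, and $\dim(\Gamma_1(S))\ge n$, giving equality. The case $n=2$ must be treated separately because there the inequality $2^1\ge 2^2-1-1$ holds with equality: here the vertices are $\{1\},\{2\},\{1,2\}$ and the graph is the path $\{1\}-\{1,2\}-\{2\}$, whose metric dimension is $1=n-1$. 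The main point to get right is exactly this boundary behaviour of the counting bound — the transition from equality (at $n=2$) to strict failure (for $n\ge 3$) is what produces the drop from $n$ to $n-1$ — so I would state the inequality $n<2^{n-1}$ carefully and exhibit an explicit single-vertex resolving set for the $n=2$ path to confirm the anomalous value.
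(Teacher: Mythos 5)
Your proof is correct and follows essentially the same route as the paper: your singletons $\{1\},\ldots,\{n\}$ are exactly the paper's resolving set $\{\overline{e_1},\ldots,\overline{e_n}\}$, your counting bound $2^k+k\geq 2^n-1$ is the paper's lower-bound argument, and the $n=2$ path case is handled identically. The reformulation of $\Gamma_1(\BB^n)$ as the intersection graph of nonempty subsets is a clean presentational device but not a different method.
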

\begin{proof}
Assume first that $n \geq 3$.
For every $i=1,2,\ldots,n$, let us denote by $\overline{e_i}$ the element with $1$ at every component, except the $i$-th component, which is equal to $0$. 
Let $W=\{\overline{e_1},\overline{e_2},\ldots,\overline{e_n}\}$. Now, choose distinct $x=(x_1,x_2,\ldots,x_n)$ and $y=(y_1,y_2,\ldots,y_n)$ in $S$. There exists $i \in \{1,2,\ldots,n\}$ such that $x_i \neq y_i$. Without loss of generality, we can assume $x_i=0$ and $y_i=1$. Then $x - \overline{e_i}$ is an edge in $\Gamma(S_1)$ and $y - \overline{e_i}$ is not, therefore $x$ and $y$ have different representations with respect to $W$. Thus, $W$ is a resolving set, which proves that $\dim(\Gamma_1(S)) \leq n$. On the other hand, Lemma \ref{semidisco} tells us that $\diam(\Gamma_1(S)) \leq 2$, so any representation of a vertex that is not a member of a resolving set, has to be a vector with every component in $\{1,2\}$. Assume that $W$ is a resolving set and denote $w=|W|$. Since there are at most $2^w$ different representations of vertices that are not in $W$ and there are $2^n-1$ vertices in $\Gamma_1(S)$, we have $2^w+w \geq 2^n-1$. Obviously, $w \mapsto 2^w+w$ is an increasing function, and $2^{n-1}+n-1 \geq 2^n-1$ implies $n \geq 2^{n-1}$, which further implies $n \leq 2$. Therefore for $n \geq 3$, we can conclude that $w \geq n$. So, we have proved that $\dim(\Gamma_1(S)) \geq n$ for all $n \geq 3$ and thus $\dim(\Gamma_1(S)) = n$.
Finally, let us examine the case $n=2$. Then $S=\BB \times \BB$ and one can easily check that $w=\{(1,0)\}$ is a resolving set, so $\dim(\Gamma_1(S)) = 1$.
\end{proof}

We can now prove the following theorem.

\begin{Theorem}
\label{semimetricu}
Let $n \geq 1$ and $S=S_1 \times S_2 \times \ldots \times S_n$, where $S_i$ is a finite antinegative commutative semiring with $Z(S_i)$ closed under addition for every $i=1,2,\ldots,n$. Furthermore, assume that for every $i=1,2,\ldots,n$ the following condition holds: $|Z(S_i)| \geq 2$  or there exists $j \neq i$ such that $|S_j \setminus Z(S_j)| \geq 2$. Then
$\dim(\Gamma_1(S))=|Z(S)|-2^n+1$.
\end{Theorem}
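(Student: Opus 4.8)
The inequality $\dim(\Gamma_1(S)) \geq |Z(S)| - 2^n + 1$ is exactly Lemma \ref{lowerb}, so the task is to exhibit a resolving set $W$ attaining this value. Recall the twin classes $M_X$ from the proof of Lemma \ref{lowerb}, indexed by the nonempty subsets $X \subseteq \{1,2,\ldots,n\}$; by Lemma \ref{whentwins} these are precisely the equivalence classes of the twin relation, every vertex lies in exactly one of them, and there are $2^n-1$ of them. The plan is to let $W$ consist of all vertices of $\Gamma_1(S)$ \emph{except} one chosen representative from each class $M_X$. Such a $W$ has cardinality $|Z(S)| - (2^n-1)$, matching the bound, so it remains only to check that this choice is resolving.

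The computation driving the argument is that adjacency in $\Gamma_1(S)$ depends only on twin classes: from the proof of Lemma \ref{whentwins}, if $a \in M_X$ and $c \in M_Z$ are distinct, then $a$ and $c$ are adjacent if and only if $X \cap Z \neq \emptyset$. Combined with $\diam(\Gamma_1(S)) \leq 2$ (Lemma \ref{semidisco}), this gives, for any vertex $u \in M_X$ and any $w \in M_Z$ with $u \neq w$, that $d(u,w)=1$ when $X \cap Z \neq \emptyset$ and $d(u,w)=2$ otherwise. In particular the representation of any omitted vertex has all coordinates in $\{1,2\}$, while every $w \in W$ has a $0$ in its own coordinate; this immediately separates each vertex of $W$ from every omitted vertex, and separates the vertices of $W$ from one another.

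It therefore remains to distinguish the $2^n-1$ omitted vertices from each other. Here I would invoke the hypothesis, which I claim is equivalent to $|M_{\{i\}}| \geq 2$ for every $i$: since $|M_{\{i\}}| = |Z(S_i)| \prod_{j \neq i} |S_j \setminus Z(S_j)|$ and each factor is at least $1$ (as $0 \in Z(S_i)$ and $1 \notin Z(S_i)$), this product exceeds $1$ exactly when $|Z(S_i)| \geq 2$ or some $j \neq i$ has $|S_j \setminus Z(S_j)| \geq 2$. Because $|M_{\{i\}}| \geq 2$, the class $M_{\{i\}}$ retains at least one representative $w_i \in W$. For any omitted vertex $u \in M_X$ the distance formula above gives $d(u,w_i)=1$ precisely when $i \in X$, so the coordinates of the representation of $u$ at $w_1,\ldots,w_n$ recover the set $X$. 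Hence two omitted vertices lying in different classes have different representations, so $W$ is resolving and $\dim(\Gamma_1(S)) = |Z(S)| - 2^n + 1$.

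The step I expect to be the crux is the equivalence between the stated hypothesis and the non-triviality of every singleton twin class $M_{\{i\}}$: it is exactly this condition that guarantees $W$ keeps enough ``single-coordinate'' reference vertices $w_1,\ldots,w_n$ to read off the twin class of each omitted vertex, and without it the argument would fail precisely for those omitted vertices whose classes cannot be separated by the surviving representatives.
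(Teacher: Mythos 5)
Your proposal is correct and follows essentially the same route as the paper: the same lower bound via Lemma \ref{lowerb}, the same resolving set (all of $Z(S)$ minus one representative per twin class $M_X$), and the same use of the hypothesis to guarantee that each singleton class $M_{\{i\}}$ keeps a representative in $W$ that detects whether $i$ lies in the class index of an omitted vertex. Your explicit reformulation of adjacency as $X\cap Z\neq\emptyset$ between classes is just a cleaner packaging of the computation already in Lemma \ref{whentwins}.
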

\begin{proof}
By Lemma \ref{lowerb}, we already know that $\dim(\Gamma_1(S)) \geq |Z(S)|-2^n+1$.
Choose any subset $\emptyset \neq X \subseteq \{1,2,\ldots,n\}$ and define $M_X=M_1 \times M_2 \times \ldots \times M_n \subset Z(S)$, where $M_i=Z(S_i)$ if $i \in X$ and $M_i=S_i \setminus Z(S_i)$ if $i \notin X$.  Now, choose $t_X \in M_X$ and denote $M'_X = M_X \setminus \{ t_X\}$. If $M'_X \neq \emptyset$, choose $s_X \in M'_X$. Define $W=\bigcup_{\emptyset \neq X \subseteq \{1,2,\ldots,n\}}M'_X$ and let us prove that $W$ is a resolving set for $\Gamma_1(S)$. Choose any distinct elements $u=(u_1,u_2,\ldots,u_n),v=(v_1,v_2,\ldots,v_n) \in Z(S)$ such that $u,v \notin W$. 
By the construction of $W$ and Lemma \ref{whentwins}, there exists $i \in \{1,2,\ldots,n\}$ such that $u_i \in Z(S_i)$ and $v_i \notin Z(S_i)$ or vice versa. Without any loss of generality, we can assume that $u_i \in Z(S_i)$ and $v_i \notin Z(S_i)$. Define $Y=\{i\}$ and observe $M'_Y$ is nonempty by our assumption. Therefore, $u - s_Y$ is an edge in $\Gamma(S_1)$ since $Z(S_i)$ closed under addition, while $v - s_Y$ is not an edge in $\Gamma(S_1)$ since $S_i$ is an antiring for every $i=1,2,\ldots,n$. Thus, $u$ and $v$ have different representations with respect to $W$ and therefore $W$ is a resolving set.
We can use the same proof as in the proof of Lemma \ref{lowerb}, to see that $|W|=|Z(S)|-2^n+1$, which now implies that $\dim(\Gamma_1(S))=|Z(S)|-2^n+1$.
\end{proof}

Let us illustrate the above with an example.

\begin{Example}
Suppose $S=S_1 \times S_2 \times \ldots \times S_n$, where $n \geq 2$ and $S_i$ is a finite antinegative semifield for every $i=1,2,\ldots,n$. Suppose that at least two of the semifields $S_i$ are not isomorphic to $\BB$. Then all the assumptions of Theorem \ref{semimetricu} are satisfied, so $\dim(\Gamma_1(S))=|Z(S)|-2^n+1=|S_1||S_2|\ldots|S_n|-(|S_1|-1)(|S_2|-1)\ldots(|S_n|-1)-2^n+1$.

On the other hand, if $S=S_1 \times S_2 \times \ldots \times S_n$, where $S_i=\BB$ for every $i=1,2,\ldots,n$. Then by Proposition \ref{semimetricbool},
$\dim(\Gamma_1(S))=n$ for $n \neq 2$ and $\dim(\Gamma_1(S))=n-1$ for $n = 2$.

Compare these results with results in \cite[Theorem 3.5 and Theorem 3.7]{dolzan1}, where even for the direct product of two finite fields, the metric dimension of the total graph is not known in all cases. Also, it is interesting to compare these results with results in \cite[Theorem 6.1]{raja}, where similar results have been obtained, but in a different setting of the zero-divisor graphs of finite commutative rings, where again the field with two elements is an exception to the rule.
\end{Example}

Let us further investigate the other possible cases that are not covered by Proposition \ref{semimetricbool} and Theorem \ref{semimetricu}.
Before we do that, we need the following definition.

\begin{Def}
 Let $S=S_1 \times S_2 \times \ldots \times S_n$ a semiring. Then for every $i=1,2,\ldots,n$, define $W_i=\{0,1\} \subseteq S_i$. Then $\supp(S)=W_1 \times W_2 \times \ldots \times W_n \subseteq S$ is a \emph{support} of $S$.    
\end{Def}

\begin{Theorem}
\label{haupt}
\begin{enumerate}
   \item
   Let $S=B_1 \times B_2 \times \ldots \times B_m \times Z_1 \times Z_2 \times \ldots \times Z_n$, where $m, n \geq 1$, $B_i=\BB$ for $i=1,2,\ldots,m$, and $Z_i$ is an antinegative finite commutative semiring with $Z(Z_i)$ closed under addition, $|Z_i| \geq 3$, $|Z_i \setminus Z(Z_i)|=1$ for every $i=1,2,\ldots,n$.
   Then $\dim(\Gamma_1(S)) = |S|+m-2^{m+n}=|Z(S)|-2^{m+n}+m+1$.
   
    \item 
    Let $S=R \times B_1 \times B_2 \times \ldots \times B_m \times Z_1 \times Z_2 \times \ldots \times Z_n$, where $m, n \geq 0$ and $mn \neq 0$, $R$ is an antinegative finite commutative semiring with $|R| \geq 3$ and $|Z(R)|=1$, $B_i=\BB$ for $i=1,2,\ldots,m$, and $Z_i$ is an antinegative finite commutative semiring with $Z(Z_i)$ closed under addition, $|Z_i| \geq 3$, $|Z_i \setminus Z(Z_i)|=1$ for every $i=1,2,\ldots,n$.
    Then $\dim(\Gamma_1(S)) = |S|-2^{m+n+1}-|R|+3=|Z(S)|-2^{m+n+1}+2$.
\end{enumerate}
\end{Theorem}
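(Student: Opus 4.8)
The plan is to handle both parts through the combinatorics of \emph{support patterns}. Write $N$ for the number of factors and, to each vertex $a=(a_1,\dots,a_N)\in Z(S)$, associate its pattern $P(a)=\{\,i : a_i\in Z(S_i)\,\}$, a nonempty subset of $\{1,\dots,N\}$. By the computation inside Lemma \ref{whentwins}, two vertices are adjacent iff their patterns intersect, they are twins iff their patterns coincide, and $M_X$ is exactly the set of vertices of pattern $X$. Since $\diam(\Gamma_1(S))\le 2$ by Lemma \ref{semidisco}, a vertex $w$ resolves a pair $u,v$ (both $\neq w$) iff $P(w)$ meets exactly one of $P(u),P(v)$. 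By Lemma \ref{twins} any resolving set $W$ omits at most one vertex of each $M_X$; putting $L=Z(S)\setminus W$, the members of $W$ are automatically mutually distinguished and distinguished from $L$ (only they carry a $0$ coordinate), so $W$ resolves $\Gamma_1(S)$ iff every pair inside $L$ is resolved by some vertex of $W$. Hence $\dim(\Gamma_1(S))=|Z(S)|-\max|L|$, and the whole problem reduces to maximising $|L|$.

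Next I would classify patterns by whether $|M_X|=1$, calling such $X$ \emph{rigid} and the rest \emph{abundant}; a direct evaluation of $|M_X|=\prod_i|M_i|$ shows that in part (1) the rigid patterns are the nonempty subsets of the Boolean block $D=\{1,\dots,m\}$, while in part (2), writing $r$ for the $R$-index and $E$ for the block of $Z_i$-indices, they are the sets $X$ with $r\in X$ and $X\cap E=\emptyset$. The point is that an abundant pattern always keeps a representative in $W$ and stays usable as a resolver, whereas a rigid pattern, once its unique vertex lands in $L$, is lost. The key structural step is to pin down which pairs \emph{no} abundant pattern can resolve: testing against the singletons $\{i\}$ that are abundant forces the two patterns to agree off the rigid coordinates, and a short case analysis then isolates exactly the hard pairs. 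In part (1) these are the $2^m$ patterns containing all of $E$, resolvable only by subsets of $D$; in part (2) they are the pairs $(E\cup T,\ \{r\}\cup E\cup T)$ with $T\subseteq D$, resolvable only by rigid patterns $\{r\}\cup U$ with $U\cap T=\emptyset$.

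For the upper bounds I would give explicit resolving sets. In part (1), keep precisely the $m$ atomic Boolean singletons $\{i\}$ ($i\in D$) and omit one vertex from every other pattern: abundant patterns separate every pair except those both containing all of $E$, and for that group the kept atomic singletons realise the characteristic-vector map $T\mapsto(\,\chi_i(T)\,)_{i\in D}$ (where $\chi_i(T)=1$ iff $i\in T$), which is injective; this yields $|L|=2^{m+n}-1-m$. In part (2), keep only the rigid pattern $\{r\}$ and omit one vertex from every other pattern: because $\{r\}\cap T=\emptyset$ always, $\{r\}$ resolves every hard pair while abundant patterns handle the rest, giving $|L|=2^{m+n+1}-2$. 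Substituting $|Z(S)|=|S|-\prod_i|S_i\setminus Z(S_i)|$ then matches both claimed values.

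The decisive work is the matching lower bounds. Part (2) is quick: if every rigid vertex were omitted, the hard pair $(E,\ \{r\}\cup E)$ would have no resolver, so at least one rigid vertex must stay in $W$ and $|L|\le 2^{m+n+1}-2$. Part (1) is the real obstacle and needs a counting argument. Let $A$ be the set of available rigid (hence $D$-)patterns and let $\mathcal T\subseteq 2^{D}$ index the omitted full-$E$ patterns; resolvability of that group forces the map $T\mapsto(\chi_Z(T))_{Z\in A}$, with $\chi_Z(T)=1$ iff $Z\cap T\neq\emptyset$, to be injective on $\mathcal T$, whence $|\mathcal T|\le 2^{|A|}$. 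Bounding $|L|$ by the omitted full-$E$ patterns, the other abundant patterns, and the omitted rigid patterns gives $|L|\le 2^{m+n}-2^m-1+(|\mathcal T|-|A|)$, and optimising $|\mathcal T|-|A|$ under $|\mathcal T|\le\min(2^{|A|},2^m)$ attains its maximum $2^m-m$ at $|A|=m$, so $|L|\le 2^{m+n}-1-m$. I expect the delicate points to be the verification that the hard pairs admit resolvers only inside the claimed restricted families and that the separating requirement produces exactly the inequality $|\mathcal T|\le 2^{|A|}$; the numerical optimisation and the remaining case checks should then be routine.
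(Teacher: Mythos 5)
Your proposal is correct and follows essentially the same route as the paper: your twin classes indexed by patterns are exactly the paper's sets $M_X$, your rigid/abundant dichotomy matches the paper's split into $N$ and $T$, your explicit resolving sets coincide with the paper's $T'\cup\{\overline{e_1}(0),\ldots,\overline{e_m}(0)\}$ in part (1) and $T'\cup\{\overline{e_1}(0)\}$ in part (2), and your optimisation of $\min(2^{|A|},2^m)-|A|$ is the paper's counting inequality $2^w+w\ge 2^m$. The pattern formalism merely makes explicit the paper's (somewhat terser) assertion that only vertices supported in the Boolean block, respectively containing the $R$-coordinate, can separate the hard group, so the two arguments are the same in substance.
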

\begin{proof}
  Define $T = \{v \in Z(S); $ there exists $v \neq u \in Z(S)$ such that $u$ and $v$ are twins in $\Gamma_1(S)\}$, $N = Z(S) \setminus T$ and $T'=T \setminus \supp(S)$. Now, let us look at both cases separately.
  \begin{enumerate}
  \item
  Obviously, $N$ contains exactly all elements from $S$ that have no twins. It follows from Lemma \ref{whentwins} that $N$ contains exactly all elements from $Z(S)$ of the form $(b_1, b_2, \ldots, b_m, 1, 1, \ldots, 1)$ for every $i=1,2,\ldots,m$. Since $T$ is a complement of $N$ in $Z(S)$,
    observe that then $T'$ contains exactly all elements of the form $(b_1, b_2, \ldots, b_m, z_1, z_2, \ldots, z_2)$ for all $b_i \in B_i$ ($1 \leq i \leq m$) and all $z_j \in Z_j$ ($1 \leq j \leq n$), except for those that have $z_j \in \{0,1\}$ for every $j$. Therefore $|T'|=|S|-2^{n+m}$. 
   
    By Lemma \ref{twins}, every resolving set $W$ has to contain at least $|T'|$ elements (and we can limit ourselves to the case where $W$ contains $T'$ without any loss of generality, since in every $Z_i$ any zero-divisor is interchangeable with $0$ in the sense that it has the same neighbourhood). 
    
    Now, define $X=\{(b_1,b_2,\ldots,b_m,0,0,\ldots,0); b_i \in B_i$ for every $i=1,2,\ldots,m \}$ and observe that $|X|=2^m$. Note that by definition, $X \cap T' = \emptyset$ and also that $d(x,t)=d(x',t)=1$ for every $x,x' \in X$ and every $t \in T'$. If we want to have different representations for different elements from $X$, we therefore have to add some elements of the form $(b_1,b_2,\ldots,b_m,1,1,\ldots,1)$ to $T'$, since only these kind of elements can be at distance more than one from elements in $X$. We now reason similarly, as in the proof of Proposition \ref{semimetricbool}.  Suppose we add $w$ elements to $T'$. By Lemma \ref{semidisco}, this means that we have at most $2^w$ possible different new representations of vertices from $X$. There are $2^m$ vertices in $X$, which implies $2^w+w \geq 2^m$. Obviously, $w \mapsto 2^w+w$ is an increasing function, and $2^{m-1}+m-1 \geq 2^m$ implies $m-1 \geq 2^{m}$, which is a contradiction, so $w \geq m$. We have therefore proved that $\dim(\Gamma_1(S)) \geq |S|+m-2^{n+m}$. 
    
    On the other hand, for every $i=1,2,\ldots,n+m$ denote by $\overline{e_i}(x)$ the element of $S$ with $1$ at every component, except the $i$-th component, which is equal to $x$. Define $T''=\{\overline{e_1}(0),\overline{e_2}(0),\ldots,\overline{e_m}(0)\}$ and $W=T' \cup T''$ for $T'$ as above. Observe that $|W|=|S|+m-2^{n+m}$. Let us prove that $W$ is a resolving set. Choose $b=(b_1,b_2,\ldots,b_m,z_1,z_2,\ldots,z_n) \neq b'=(b_1',b_2',\ldots,b_m',z_1',z_2',\ldots,z_n') \in Z(S) \setminus W$ for some $b_i, b_i' \in B_i$ and some $z_j, z,j' \in Z_j$ for every $1 \leq i \leq m, 1 \leq j \leq n$. Suppose firstly that there exists $1 \leq j \leq n$ such that $z_j \neq z_j'$. Since $b, b' \notin W$, we know that $z_j=0$ and $z_j'=1$ or vice versa. Without loss of generality, we can assume the former.
    Now, choose $x \in Z(S_j) \setminus \{0\}$ and note that $\overline{e_j}(x) \in T'$. This now implies that $d(b, \overline{e_j}(x))=1$, while $d(b', \overline{e_j}(x)) > 1$.
    Suppose now that $z_j = z_j'$ for all $1 \leq j \leq n$.
    Since $b \neq b'$, there exists $1 \leq i \leq m$ such that $b_i \neq b_i'$. Then we have $d(b,\overline{e_i}(0)) \neq d(b',\overline{e_i}(0))$.
    This implies that $W$ is indeed a resolving set, so $\dim(\Gamma_1(S)) \leq |S|+m-2^{n+m}$, which now proves our assertion. Note also that every element of $S$ except for $(1,1,\ldots,1)$ is a zero divisor, so 
    $|S|+m-2^{n+m}=|Z(S)|-2^{m+n}+m+1$.

 \item
   Since $N$ consists of exactly all those elements from $Z(S)$ that have no twins, Lemma \ref{whentwins} this time shows that $N$ contains exactly all the elements from $Z(S)$ of the form $(0, b_1, b_2, \ldots, b_m, 1, 1, \ldots, 1)$, where $b_i \in B_i$ for every $i=1,2,\ldots,m$. Again, $T$ is the complement of $N$ in $Z(S)$, so
  we can reason similarly as above that $|T'|=|S|-2^{m+n+1}-(|R|-2)$, since in this case we have to exclude from $S$ all the elements of the form $(r, b_1, b_2, \ldots, b_m, z_1, z_2, \ldots, z_2)$ for all $r \in \{0,1\}$, all $b_i \in B_i$ and all $z_j \in \{0,1\}$, as well as $|R|-2$ elements of the form $(r, 1, 1, \ldots, 1)$ for $r \notin \{0,1\}$ since they are not zero-divisors at all.

  Suppose now that $W$ is a resolving set. Using Lemma \ref{twins}, we can reason that $W$ has to contain at least $|T'|$ elements. We can also limit ourselves to the case where $W$ contains $T'$ without any loss of generality, since in every $Z_i$ any zero-divisor is interchangeable with $0$ and in $R$ every non-zero element is interchangeable with $1$ in the sense that they have the same neighbourhood. 

  Let us assume that $W=T'$ and
  choose $b=(1,0,0,\ldots,0)$ and $b'=(0,0,\ldots 0)$ in $S$. Since $m$ and $n$ are not both equal to zero,
  $b$ and $b'$ are zero-divisors. By the construction of $T'$, we know that $b, b' \notin W$. However, for every $w=(w_1,w_2, \ldots, w_{n+m+1}) \in W$, we have one of the following two possible cases.
  \begin{enumerate}
      \item 
      We have $w_1=0$ and thus (since $w \notin \supp(S)$) there exists $1 \leq j \leq n$ such that $w_{1+m+j} \notin \{0,1\}$. But since all elements of $S_{1+m+j}$ except $1$ are zero-divisors, we have $w_{1+m+j} \in Z(S_{1+m+j})$, so in this case $d(b,w)=d(b',w)=1$.
      \item
      We have $w_1 \neq 0$. Since $w$ is a zero divisor there exists $2 \leq j \leq n+m+1$ such that $w_j$ is a zero-divisor. Again, we have $d(b,w)=d(b',w)=1$.
  \end{enumerate}
  So, we have proved that $b$ and $b'$ have the same representations with respect to $W$, which is a contradiction with the fact that $W$ is a resolving set. Therefore $T' \subsetneq W$, so $|W| \geq |T'|+1=|S|-2^{m+n+1}-(|R|-2)+1=|S|-2^{m+n+1}-|R|+3$. This implies that we have $\dim(\Gamma_1(S)) \geq |S|-2^{n+m+1}-|R|+3$.
   
   Let us now prove that we also have $\dim(\Gamma_1(S)) \leq |S|-2^{n+m+1}-|R|+3$.   
   For every $i=1,2,\ldots,n+m+1$ denote by $\overline{e_i}(x)$ the element of $S$ with $1$ at every component, except the $i$-th component, which is equal to $x$. Let us define $W=T' \cup \{e_1(0)\}$ for the set $T'$ defined as above. 
   Observe that $|W|=|S|-2^{m+n+1}-(|R|-2)+1=|S|-2^{m+n+1}-|R|+3$. Also, 
   Let us prove that $W$ is a resolving set. Choose $b=(r_1,b_1,b_2,\ldots,b_m,z_1,z_2,\ldots,z_n) \neq b'=(r_1',b_1',b_2',\ldots,b_m',z_1',z_2',\ldots,z_n') \in Z(S) \setminus W$ for some $r_1,r_1' \in R$, some $b_i, b_i' \in B_i$ and some $z_j, z_j' \in Z_j$ for every $1 \leq i \leq m, 1 \leq j \leq n$. 
    Suppose firstly that there exists $1 \leq j \leq n$ such that $z_j \neq z_j'$. Since $b, b' \notin W$, we know that $z_j=0$ and $z_j'=1$ or vice versa. Without loss of generality, we can assume the former.
    Now, choose $x \in Z_j \setminus \{0,1\}$ and note that $\overline{e_j}(x) \in T'$ by the construction of the set $T'$. This now implies that $d(b, \overline{e_j}(x))=1$, while $d(b', \overline{e_j}(x)) > 1$.
    Suppose now that $z_j = z_j'$ for all $1 \leq j \leq n$.
    Since $b \neq b'$, we have $r_1 \neq r_1'$ or there exists $1 \leq i \leq m$ such that $b_i \neq b_i'$. 
    Suppose firstly that  $b_i \neq b_i'$ for some $1 \leq i \leq m$. Without any loss of generality, we can assume that $b_i=0$ and $b_i'=1$. Choose $r \in R \setminus \{0\}$ and define $s=e_1(r) e_i(0)$. Again, note that $s \in T'$ and that we now have $d(b,s)=1$ and $d(b',s) > 1$. 
    Finally, suppose $b_i=b_i'$ and $z_j=z_j'$ for all $1 \leq i \leq m$ and $1 \leq j \leq n$. Then we have $r_1 \neq r_1'$ and since $b, b' \notin W$ we either have $r_1=0$ or $r_1'=1$ or vice versa. Again, without loss of generality, assume the former. Observe that $d(b,e_1(0))=1$ and $d(b',e_1(0)) > 1$.
    This implies that $W$ is indeed a resolving set, so $\dim(\Gamma_1(S)) \leq |S|-2^{n+m+1}-|R|+3$. Note also that all elements of $S$ except for elements of the form $(r,1,\ldots,1)$ for some $r \in R \setminus \{0\}$ are zero-divisors, so $|S|=|Z(S)|+|R|-1$ and thus
    $\dim(\Gamma_1(S)) = |S|-2^{n+m+1}-|R|+3=|Z(S)|-2^{m+n+1}+2$, which now proves our assertion.
  \end{enumerate}  
  \end{proof}

Observe that Proposition \ref{semimetricbool}, Theorem \ref{semimetricu} and Theorem \ref{haupt} now cover all possible cases of direct product of antinegative finite commutative semirings with their sets of zero-divisors closed under addition, so we have completely determined the metric dimension in this setting. 


     

\bigskip

\bigskip

{\bf Statements and Declarations} \\

The author states that there is no conflict of interest.

\bigskip


\bibliographystyle{amsplain}
\bibliography{biblio}

\end{document}